\definecolor{labelkey}{gray}{.8}
\definecolor{refkey}{gray}{.8}
\definecolor{darkred}{rgb}{0.9,0.1,0.1}
 \newcounter{enunciato}[section]
 \newtheorem{ittheorem}{Theorem}
 \newtheorem{itlemma}{Lemma}
 \newtheorem{itproposition}{Proposition}
 \newtheorem{itcorollary}{Corollary}
 \newtheorem{itdefinition}{Definition}
 \newtheorem{itremark}{Remark}
 \newtheorem{itclaim}{Claim}
 \newtheorem{itfact}{Fact}
 \newtheorem{itconjecture}{Conjecture}
 \newenvironment{theorem}{\addtocounter{enunciato}{1}
 \begin{ittheorem}}{\end{ittheorem}}
 \newenvironment{lemma}{\addtocounter{enunciato}{1}
 \begin{itlemma}}{\end{itlemma}}
 \newenvironment{proposition}{\addtocounter{enunciato}{1}
 \begin{itproposition}}{\end{itproposition}}
 \newenvironment{corollary}{\addtocounter{enunciato}{1}
 \begin{itcorollary}}{\end{itcorollary}}
 \newenvironment{definition}{\addtocounter{enunciato}{1}
 \begin{itdefinition}}{\end{itdefinition}}
 \newenvironment{remark}{\addtocounter{enunciato}{1}
 \begin{itremark}}{\end{itremark}}
 \newenvironment{claim}{\addtocounter{enunciato}{1}
 \begin{itclaim}}{\end{itclaim}}
 \newenvironment{fact}{\addtocounter{enunciato}{1}
 \begin{itfact}}{\end{itfact}}
 \newenvironment{conjecture}{\addtocounter{enunciato}{1}
 \begin{itconjecture}}{\end{itconjecture}}
 \newcommand{\be}[1]{\begin{equation}\label{#1}}
 \newcommand{\ee}{\end{equation}}
 \newcommand{\bl}[1]{\begin{lemma}\label{#1}}
 \newcommand{\el}{\end{lemma}}
 \newcommand{\br}[1]{\begin{remark}\label{#1}}
 \newcommand{\er}{\end{remark}}
 \newcommand{\bt}[1]{\begin{theorem}\label{#1}}
 \newcommand{\et}{\end{theorem}}
 \newcommand{\bd}[1]{\begin{definition}\label{#1}}
 \newcommand{\ed}{\end{definition}}
 \newcommand{\bcl}[1]{\begin{claim}\label{#1}}
 \newcommand{\ecl}{\end{claim}}
 \newcommand{\bfact}[1]{\begin{fact}\label{#1}}
 \newcommand{\efact}{\end{fact}}
 \newcommand{\bp}[1]{\begin{proposition}\label{#1}}
 \newcommand{\ep}{\end{proposition}}
 \newcommand{\bc}[1]{\begin{corollary}\label{#1}}
 \newcommand{\ec}{\end{corollary}}
 \newcommand{\bcj}[1]{\begin{conjecture}\label{#1}}
 \newcommand{\ecj}{\end{conjecture}}
 \newcommand{\bpr}{\begin{proof}}
 \newcommand{\epr}{\end{proof}}
 \newcommand{\bprlem}[1]{\begin{proofof}{\it Lemma \ref{#1}}.\,\,}
 \newcommand{\eprlem}{\end{proofof}}
 \newcommand{\bprthm}[1]{\begin{proofof}{\it Theorem \ref{#1}}.\,\,}
 \newcommand{\eprthm}{\end{proofof}}
 \newcommand{\bprprop}[1]{\begin{proofof}{\it Proposition \ref{#1}}.\,\,}
 \newcommand{\eprprop}{\end{proofof}}
 \newcommand{\bi}{\begin{itemize}}
 \newcommand{\ei}{\end{itemize}}
 \newcommand{\ben}{\begin{enumerate}}
 \newcommand{\een}{\end{enumerate}}
 \newenvironment{proof}{\noindent {\em Proof}.\,\,}{\hspace*{\fill}$\halmos$\medskip}
 \newenvironment{proofof}{\noindent {\em Proof of\,\,}}{\hspace*{\fill}$\halmos$\medskip}
 \newcommand{\halmos}{\rule{1ex}{1.4ex}}
 \newcommand{\one}{{\mathchoice {1\mskip-4mu\mathrm l}
         {1\mskip-4mu\mathrm l}
         {1\mskip-4.5mu\mathrm l}
         {1\mskip-5mu\mathrm l}}}
\def \E {{\mathbb E}}
\def \N {{\mathbb N}}
\def \P {{\mathbb P}}
\def \R {{\mathbb R}}
\def \Z {{\mathbb Z}}
\def \lra \leftrightarrow
\def \ra {\rightarrow}
\def \ba {\begin{array}}
\def \ea {\end{array}}
\def \lra {\longrightarrow}
\def \lra {{\leftrightarrow}}
\def \subset {\subseteq}
\def \emptyset {\varnothing}
\def\one{\rlap{\mbox{\small\rm 1}}\kern.15em 1}
\newlength{\dhatheight}
\begin{document}

\title{On the  threshold of spread-out voter model percolation}

\author{Bal\'azs R\'ath\textsuperscript{1}, Daniel Valesin\textsuperscript{2}}
\footnotetext[1]{Budapest University of Technology and Economics, Egry J\'ozsef u. 1, 1111 Budapest, Hungary. \\ \url{rathb@math.bme.hu}}
\footnotetext[2]{\noindent University of Groningen, Nijenborgh 9, 9747 AG Groningen, The Netherlands.\\ \url{d.rodrigues.valesin@rug.nl}}
\date{May 16, 2017}
\maketitle

\begin{abstract}
In the $R$-spread out, $d$-dimensional voter model, each site $x$ of $\mathbb{Z}^d$ has state (or `opinion') 0 or 1 and, with rate 1, updates its opinion by copying that of some site $y$ chosen uniformly at random among all sites within distance $R$ from $x$. If $d \geq 3$, the set of (extremal) stationary measures of this model is given by a family $\mu_{\alpha, R}$, where $\alpha \in [0,1]$. Configurations sampled from this measure are polynomially correlated fields of 0's and 1's in which the density of 1's is $\alpha$ and the correlation weakens as $R$ becomes larger. We study these configurations from the point of view of nearest neighbor site percolation on $\mathbb{Z}^d$, focusing on asymptotics as $R \to \infty$. In \cite{RV15}, we have shown that, if $R$ is large, there is a critical value $\alpha_c(R)$ such that there is percolation if $\alpha > \alpha_c(R)$ and no percolation if $\alpha < \alpha_c(R)$. Here we prove that, as $R \to \infty$, $\alpha_c(R)$ converges to the critical probability for Bernoulli site percolation on $\mathbb{Z}^d$. Our proof relies on a new upper bound on the joint occurrence of events under $\mu_{\alpha,R}$ which is of independent interest.
\end{abstract}

\noindent \textsc{Keywords:} interacting particle systems, voter model, percolation\\
\textsc{AMS MSC 2010:} 60K35, 82C22, 82B43

\section{Introduction}
The \textit{voter model} on $\Z^d$ with range $R \in \N$ is a Markov process $(\xi_t)_{t\geq 0}$ on $\{0,1\}^{\Z^d}$ with infinitesimal pregenerator defined as follows, for any function $f: \{0,1\}^{\Z^d} \to \R$  that depends only on finitely many coordinates:
\begin{equation}(\mathcal{L}f)(\xi) = \sum_{\substack{x,y \in \Z^d:\\0<|x-y|_1 \leq R}}\frac{f(\xi^{y\to x}) - f(\xi)}{|B_1(R)|-1},  \label{eq:generator}\end{equation}
where  $|\cdot |_1$ is the $\ell^1$-norm on $\Z^d$, $B_1(R)$ is the set of vertices of $\Z^d$ with $\ell^1$-norm smaller than or equal to $R$, $|B_1(R)|$ is the cardinality of this set and
$$\xi^{y\to x}(z) = \begin{cases} \xi(z),&\text{if } z \neq x,\\\xi(y),&\text{if }z = x,\end{cases}\quad z \in \Z^d. $$
In the usual interpretation, sites of $\Z^d$ represent individuals (``voters'') and the states 0 and 1 represent two conflicting opinions. The dynamics defined by \eqref{eq:generator} is then explained in words as follows. Individuals are all endowed with independent exponential clocks (all with parameter 1); whenever the clock of individual $x$ rings, another individual $y$ is chosen uniformly at random within $\ell^1$-distance at most $R$ from $x$, and then $x$ copies the opinion of $y$.

This process has been introduced independently in \cite{CS73} and \cite{HL75}. We refer the reader to \cite{Li85} for the general theory on the voter model, including all statements that we mention without explicit reference in this introduction.

Let $\mathscr{I}_{d,R}$ denote the set of extremal stationary distributions of the voter model on $\Z^d$ and range $R$. In case $d = 1$ or $2$, for any $R$, this set consists only of $\delta_{\underline{0}}$ and $\delta_{\underline{1}}$, the two measures that give full mass to the configurations which are identically equal to 0 or 1, respectively. In case $d \geq 3$, $\mathscr{I}_{d,R}$ consists of a one-parameter family of measures
$$\{\mu_{\alpha, R}:\; 0 \leq \alpha \leq 1\}$$
(we will generally omit the dimension $d$ from our notation). For each $\alpha$, $\mu_{\alpha, R}$ is obtained as the distributional limit as time is taken to infinity (which is shown to exist) of the process started from the measure in which the states at all sites are independent and distributed as Bernoulli($\alpha$). Each of the measures $\mu_{\alpha,R}$ is invariant and ergodic with respect to translations in $\Z^d$. Additionally,
$$\mu_{\alpha, R}(\{\xi: \xi(0) = 1\}) = \alpha,$$
so that $\alpha$ is a density parameter. Finally, $\mu_{\alpha,R}$ exhibits polynomial decay of correlations:  for any $d \geq 3$, $R \in \N$ and $\alpha \in (0,1)$,
\begin{equation}
c(\alpha,R)\cdot |x-y|_1^{2-d} < \text{Cov}_{\mu_{\alpha,R}}(\xi(x),\xi(y)) <C(\alpha,R)\cdot |x-y|_1^{2-d},\quad x,y\in \Z^d, \; x\neq y.
\end{equation}

In \cite{RV15}, addressing earlier work by \cite{LS86,BLM87,ML06,Ma07}, the authors have considered the problem of \textit{percolation phase transition} of the measure $\mu_{\alpha, R}$, which will now be enunciated. For given values of $d$, $R$ and $\alpha$, let $\xi \in \{0,1\}^{\Z^d}$ be a configuration sampled from $\mu_{\alpha,R}$. Consider the subgraph of the nearest-neighbor lattice $\Z^d$ induced by the set of vertices $\{x: \xi(x) = 1\}$ (i.e., the set of \emph{open} sites). Let $\text{Perc}$ be the event that this subgraph contains an infinite connected component (\textit{cluster}).
 By ergodicity, $\mu_{\alpha,R}(\text{Perc})$ is either 0 or 1.  The statement that the measures $\mu_{\alpha, R}$ exhibit a non-trivial percolation phase transition with respect to the density parameter $\alpha$ means that, for any $d \geq 3$ and $R \in \N$, there exists $\alpha_c = \alpha_c(R) \in (0,1)$ (depending on $d$ and $R$) such that $\mu_{\alpha, R}(\text{Perc})=0$ if $\alpha < \alpha_c$ and $\mu_{\alpha, R}(\text{Perc})=1$ is $\alpha > \alpha_c$. The main result of \cite{RV15} is that this is indeed the case under two sets of assumptions: first, $d \geq 5$, and second, $d = 3$ or $4$ and $R$ large enough.

In the present paper, we continue this investigation by considering the percolation event under $\mu_{\alpha,R}$ when $R$ is taken to infinity. Before stating our result, we make a brief detour which will make the statement more natural. Let us first present a well-known alternate construction of $\mu_{\alpha, R}$ through \textit{coalescing random walks}.

Consider a collection of particles simultaneously performing random walks on $\Z^d$ and subject to the following rules. At time 0, each site of $\Z^d$ contains one particle. Each particle decides to jump to a new location after an amount of time distributed as Exponential(1). Jumping from a site $x$, a particle chooses its destination $y$ uniformly at random among all sites of $B_1(x,R)\setminus \{ x \}$. If $y$ is already occupied by another particle, the two particles coalesce, becoming a single particle. 

This process, when run for all times $0 \leq t < \infty$, induces a partition of $\Z^d$ as follows. We say that $x,y \in \Z^d$ are in the same partition class if the particle at $x$ at time 0 eventually coalesces with the particle at $y$ at time 0 (by this we include situations in which these particles coalesce with other particles before coalescing with each other). Note that almost surely each partition class has infinite cardinality.
 See Section \ref{section:coal} for more precise definitions. Given these partition classes, we then independently assign 0's and 1's to each class with probability $\alpha$ and $1-\alpha$, respectively. The distribution of the resulting configuration $\xi \in \{0,1\}^{\Z^d}$ then coincides with $\mu_{\alpha, R}$. This construction is a consequence of the well-known fact that the voter model and the system of coalescing random walks just described exhibit a temporal \textit{duality} relation.

Assume $d \geq 3$ and $A = \{x_1,\ldots, x_n\} \subset \Z^d$ be an arbitrary finite set. In case $R$ is very large (compared for example to the diameter of $A$), it is very likely that the particles initially located at $A$ will quickly disperse and never coalesce with each other. Indeed, the probability that two range-$R$ random walks on $\Z^d$ ever meet tends to zero as $R \to \infty$, uniformly over their initial locations, see \eqref{eq:bound_green} below. Thus, all the particles initially at $A$ will end up in distinct partition classes, so that, for any $n$ and any $(i_1,\ldots, i_n) \in \{0,1\}^n$ and any $n$-tuple $(x_1,\ldots,x_n)$ of distinct vertices in $\Z^d$ we have
\begin{equation}\label{local_conv_to_bernoulli}
\lim_{R \to \infty}  \mu_{\alpha,R}\big( \, (\xi(x_1),\ldots, \xi(x_n)) = (i_1,\ldots, i_n)\, \big) = \alpha^{\sum_k i_k}\cdot (1-\alpha)^{n - \sum_k i_k}.
\end{equation}
Another way of stating this is that, as $R\to\infty$, $\mu_{\alpha,R}$ converges weakly (taking the product topology on the space of configurations) to $\pi_\alpha$, the infinite product over $\Z^d$ of the Bernoulli($\alpha$) distribution. Let
$$p_c = \sup\{ \; p: \pi_p(\text{Perc}) = 0 \; \}$$
be the critical parameter of independent Bernoulli site percolation in $\Z^d$; see \cite{Gr99} for the well-known fact that $p_c \in (0,1)$ for any $d \geq 2$. We are now ready to state our main result.
\begin{theorem}\label{thm:main} For any $d\geq 3$, as $R \to \infty$, the critical density value for percolation phase transition of the stationary measures of the voter model with range $R$ converges to the critical density value for independent Bernoulli percolation:
\begin{equation}
\lim_{R\to\infty} \alpha_c(R) = p_c.
\end{equation}
\end{theorem}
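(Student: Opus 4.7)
The plan is to establish the two inequalities $\limsup_{R\to\infty}\alpha_c(R) \leq p_c$ and $\liminf_{R\to\infty}\alpha_c(R) \geq p_c$ separately, in both cases by a coarse--graining / renormalization argument that leverages the local weak convergence \eqref{local_conv_to_bernoulli} together with the new joint occurrence bound announced in the abstract. The weak convergence is used to transport \emph{finite}-size percolation criteria from the Bernoulli product measure $\pi_\alpha$ to $\mu_{\alpha,R}$ for large $R$, while the joint occurrence bound compensates for the fact that $\mu_{\alpha,R}$ is not a finite-range field, by approximately decoupling block events supported on far-apart regions of $\Z^d$.

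For the upper bound, fix $\alpha > p_c$ and pick $\alpha' \in (p_c,\alpha)$. I would invoke a quantitative finite-size criterion for supercritical Bernoulli site percolation (of Pisztora / Grimmett--Marstrand type) to produce a scale $L$ and a local ``good block'' event $G_L$ depending only on the coordinates in $B_1(L)$, such that (i) $\pi_{\alpha'}(G_L)$ is as close to $1$ as desired, and (ii) simultaneous occurrence of translates $G_L^v$ on a $*$-connected family of blocks $v \in L\Z^d$ forces the corresponding $\xi$-open clusters to link into a single connected cluster. By \eqref{local_conv_to_bernoulli}, $\mu_{\alpha,R}(G_L^v) \geq \pi_{\alpha'}(G_L) - o_R(1)$ uniformly in $v$. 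The step that does not follow from Bernoulli theory is stochastic domination: because $\mu_{\alpha,R}$ lacks finite-range dependence, the Liggett--Schonmann--Stacey (LSS) theorem cannot be quoted directly. The joint occurrence bound should be used to show that for sufficiently separated $v_1,\dots,v_k$,
\[
\mu_{\alpha,R}\Big(\bigcap_{i=1}^k (G_L^{v_i})^c\Big) \leq \prod_{i=1}^k \big(1-\mu_{\alpha,R}(G_L^{v_i})\big) + \text{(small error)},
\]
and this ``approximate independence'' input fed into a variant of the LSS domination dominates the good-block field from below by a supercritical Bernoulli product measure on $L\Z^d$, whose infinite cluster forces $\mu_{\alpha,R}(\text{Perc}) = 1$.

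For the lower bound, fix $\alpha < p_c$. By sharpness of the subcritical Bernoulli phase (Menshikov / Aizenman--Barsky), $\pi_\alpha(0 \leftrightarrow \partial B_1(L)) \leq Ce^{-cL}$, so one can choose $L$ making this smaller than any prescribed $\delta$. Let $E_L^v$ be the increasing event that inside the block indexed by $v \in L\Z^d$ there is a long open crossing (so that $\pi_\alpha(E_L^v) \leq \delta$); by \eqref{local_conv_to_bernoulli} the same bound holds for $\mu_{\alpha,R}(E_L^v)$ up to $o_R(1)$, uniformly in $v$. Existence of an unbounded $\xi$-open cluster forces $E_L^v$ to hold along an infinite $*$-connected block path through the origin, so it suffices to prove
\[
\sum_{n\geq 1}\sum_{\gamma:\,|\gamma|=n} \mu_{\alpha,R}\Big(\bigcap_{v\in\gamma} E_L^v\Big) < \infty,
\]
with $\gamma$ ranging over self-avoiding $*$-connected block paths of length $n$ starting at the origin. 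Extracting a well-separated subfamily from each $\gamma$ and applying the joint occurrence bound gives an essentially geometric estimate of the form $(C'\delta)^{cn}$; combined with the standard connective-constant bound on the number of such $\gamma$, a Borel--Cantelli argument closes to yield $\mu_{\alpha,R}(\text{Perc}) = 0$.

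The main obstacle, and the place where the new joint occurrence inequality is indispensable, is ensuring that the error terms it produces decay quickly enough in the spacing of blocks to be (a) absorbable inside an approximate-independence version of the LSS domination in the supercritical direction, and (b) summable against the exponential proliferation of $*$-connected block paths in the subcritical direction. A careful tuning of the renormalization scale $L$ (possibly growing slowly with $R$) and of the spacing between probed blocks, against the polynomial decay rate of correlations of $\mu_{\alpha,R}$, is where the argument has to be delicate.
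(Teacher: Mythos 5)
Your high-level strategy is the right one: prove $\liminf_R \alpha_c(R) \geq p_c$ and $\limsup_R \alpha_c(R) \leq p_c$ separately, in both cases coarse-graining into block events on a sublattice, using \eqref{local_conv_to_bernoulli} to transfer finite-size Bernoulli estimates to $\mu_{\alpha,R}$, and controlling the residual correlations via the joint occurrence bound. Choosing a scale $L$ (resp.\ $M$) so that the bad-crossing (resp.\ bad-block) probability under $\pi_\alpha$ is small is exactly what the paper does; and, as you anticipate, for $\alpha > p_c$ the ``good block'' event is of Pisztora / Deuschel--Pisztora type. However, the way you feed the joint occurrence bound into renormalization diverges from the paper in two places, and both spots contain real gaps.

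\textbf{Subcritical direction.} You propose a union bound over self-avoiding $*$-connected block paths $\gamma$ of length $n$, plus ``extracting a well-separated subfamily.'' The joint occurrence bound has multiplicative error $\exp\bigl\{ c\, f(R) \sum_{u\prec v} |u-v|^{2-d} \bigr\}$ over the union of probed boxes, and for a generic self-avoiding block path of $n$ boxes in $L\Z^d$ that sum is of order $n^{1+2/d}$, not $O(n)$; so the exponential error defeats the $\delta^n$ gain for large $n$ unless one thins aggressively. But thinning a path trades off the size of the well-separated subfamily against its separation, and a naive extraction fails to yield a subfamily that is simultaneously of size $\Omega(n)$ and sparse enough. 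What resolves this tension in the paper is the specific multi-scale renormalization of Section \ref{ss:renormalization} (proper embeddings of the binary tree $T_N$), whose bottom-level boxes enjoy the sparsity of Lemma \ref{lem:positions}: within distance $6^k L/2$ of any box there are at most $2^{k-1}$ others, giving $\sum_{u\prec v}|u-v|^{2-d} = O(2^N)$ for $2^N$ boxes (see \eqref{eq:upper_A}--\eqref{C_prime_prime}). Moreover the number of such embeddings is $\leq C_d^{2^N}$ (Lemma \ref{lem:renorm_count}), so the union bound and the joint occurrence bound balance to give a doubly-exponentially small crossing probability. This scheme \emph{is} the precise implementation of your ``extract a well-separated subfamily,'' and without something like it, the argument does not close.

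\textbf{Supercritical direction.} The joint occurrence inequality is a one-sided \emph{upper} bound on the probability of a joint occurrence; it does not give a lower bound on conditional probabilities of the type $\P(G_L^v \mid \sigma(\text{field far from } v)) \geq p$, which is the hypothesis of Liggett--Schonmann--Stacey. Your proposed ``variant of LSS'' built only from the joint upper bound is a genuine missing ingredient: an upper bound $\mu_{\alpha,R}(\bigcap_i (G_L^{v_i})^c) \lesssim (1-\pi_\alpha(G_L))^k$ does not control the conditional law on any single block, and the paper's own remark that $\mu_{\alpha,R}$ admits no Bernoulli minorant should make one wary of smuggling in a domination step. The paper avoids LSS entirely: after coarse-graining to $\tilde\xi$, it runs the same multi-scale renormalization on the \emph{dual} picture, bounding the probability of a $*$-connected path of closed sites in $\tilde\xi$ (a Peierls argument, see \eqref{eq:red_second_step}--\eqref{supcrit_bound_decorr_applied}), which is exactly the kind of intersection-of-bad-events probability the joint occurrence bound is built to control. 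This makes the supercritical and subcritical arguments structurally identical, and it is cleaner than a domination route. As a small additional remark, the scales $L$ and $M$ are chosen depending only on $\alpha$ and are kept fixed as $R\to\infty$; letting $L$ grow with $R$ is unnecessary once the multi-scale scheme is in place.
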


This convergence result seems natural given \eqref{local_conv_to_bernoulli}, but the proof is not at all
automatic, as we now argue. First, $\mu_{\alpha,R}$ cannot be stochastically dominated (or minorated)
by a Bernoulli product measure $\pi_p, p \in (0,1)$: the $R=1$ case is proved in 
 \cite[Section 5.3.2]{st17} (using results of \cite{LS88}), the proof for general $R$ is identical.
 Second,  just because a probability measure on $\{0,1\}^{\Z^d}$ ``locally" looks like Bernoulli percolation, we
cannot automatically draw any conclusions about the percolative properties of open sites:
 \cite[Theorem 19]{bgp}  states that for any $K$ and any $p \in (0,1)$, there exists a
 probability measure $\mu$ on $\{0,1\}^{\Z^d}$ that satisfies 
 \begin{equation}\label{k_wise_indep}
 \mu\big( \, (\xi(x_1),\ldots, \xi(x_K)) = (i_1,\ldots, i_K)\, \big) = \alpha^{\sum_k i_k}\cdot (1-\alpha)^{K - \sum_k i_k}
\end{equation}
 for  any $(i_1,\ldots, i_K) \in \{0,1\}^K$ and any $K$-tuple $(x_1,\ldots,x_K)$ of distinct vertices in $\Z^d$ such that $\mu(\text{Perc})=1$, moreover
 there exists another $\mu$ satisfying \eqref{k_wise_indep} for which $\mu(\text{Perc})=0$.

 \textit{Decoupling inequalities} are often instrumental in dealing with polynomially correlated percolation models. Powerful such inequalities have been proved for other models, such as random interlacements \cite{sznitman_decoupling, popov_teixeira}, the Gaussian free field \cite{RS13, popov_rath}  and certain massless gradient Gibbs measures \cite{rodriguez17}, using the so-called \emph{sprinkling} technique. In contrast, our main tool is Lemma \ref{lem:disjointly}, which is not a decoupling inequality and is not proved through sprinkling. 
  It allows for a direct comparison between the measures $\mu_{\alpha,R}$ and $\pi_\alpha$ -- and hence a direct control on correlations present in $\mu_{\alpha,R}$ -- which relies on a natural coupling of systems of independent random walks, coalescing random walks and annihilating random walks.

\section{Notation and preliminary results}

\subsection{Notation for sets, paths and configurations}
For any set $S$, the cardinality of $S$ is denoted by $|S|$ and the indicator function of $S$ by $\mathds{1}_S$.

For a vector $x$ in $\mathbb{Z}^d$, the $\ell^\infty$-norm of $x$ is denoted by $|x|$ and the $\ell^1$-norm of $x$ by $|x|_1$. Two vertices $x,y$ are \textit{neighbors} if $|x-y|_1 = 1$; we denote this by $x \sim y$. Vertices $x$ and $y$ are $*$-neighbors if $|x-y| = 1$.

The balls and spheres corresponding to these norms are then given by
\begin{align*}
&B(L) = \{x \in \Z^d: |x| \leq L\}, &B(x,L) = \{y \in \Z^d: |x-y| \leq L\},\\
&B_1(L) = \{x \in \Z^d: |x|_1 \leq L\}, &B_1(x,L) = \{y \in \Z^d: |x-y|_1 \leq L\},\\
&S(L) = \{x \in \Z^d: |x|= L\}, &S(x,L) = \{y \in \Z^d: |x-y|= L\}.
\end{align*}

Given a finite set $A \subset \Z^d$, the \textit{diameter} of $A$ is 
\begin{equation*}
\text{diam}(A) = \sup\{|x-y|: x,y \in A\}. \end{equation*}
Given sets $A, B \subset \Z^d$, the \textit{distance} between $A$ and $B$ is
\begin{equation*}
\text{dist}(A,B) = \min\{|x-y|:x \in A,\;y \in B\}.
\end{equation*}

A \textit{nearest-neighbor} path in $\Z^d$ is a finite or infinite sequence $\gamma = (\gamma(0),\gamma(1),\ldots)$ such that $\gamma(i)\sim \gamma(i+1)$ for each $i$. A \textit{$*$-connected path} is a sequence $\gamma = (\gamma(0), \gamma(1),\ldots)$ such that $\gamma(i)$ and $\gamma(i+1)$ are $*$-neighbors for each $i$. We observe that any nearest-neighbor path is also a $*$-connected path.

Given disjoint sets $A, B \subset \Z^d$ and a configuration $\xi \in \{0,1\}^{\Z^d}$, we say that $A$ and $B$ are connected by an open path in $\xi$ (and write $A \stackrel{\xi}{\leftrightarrow} B$) if there exists a nearest-neighbor path $\gamma = (\gamma(0), \ldots, \gamma(n))$ such that $\gamma(0)$ is the neighbor of a point of $A$, $\gamma(n)$ is the neighbor of a point of $B$ and $\xi(\gamma(i)) = 1$ for all $i$. Similarly, we write $A\stackrel{*\xi}{\leftrightarrow} B$ if there exists a $*$-connected path from a $*$-neighbor of a point of $A$ to a $*$-neighbor of a point of $B$ and $\xi$ is equal to 1 at all points in this path.

 The collection of cylinder sets of $\{0,1\}^{\Z^d}$ associated to $A$ is denoted by $\mathcal{F}_A$. This is the set of subsets of $\{0,1\}^{\Z^d}$ of the form
\begin{equation} \label{eq:cylinder}\{\xi \in \{0,1\}^{\Z^d}: \xi|_A \in E_0\},\end{equation}
where $E_0 \subseteq \{0,1\}^A$ and $\xi|_A$ denotes the restriction of $\xi$ to $A$. Sometimes, as an abuse of notation, the set in \eqref{eq:cylinder} and the corresponding set $E_0$ will be treated as if they were the same.
As usual, we endow $\{0,1\}^{\Z^d}$ with the $\sigma$-algebra $\mathcal{F}$ generated by all the cylinder sets.

For $x \in \Z^d$ and $\xi\in \{0,1\}^{\Z^d}$, we let $\uptau_x\xi$ be the configuration given by
$$(\uptau_x\xi)(y) = \xi(y-x),\quad y \in \Z^d.$$
Given $E \in \mathcal{F}$, let $\theta_x E = \{\uptau_x\xi: \xi \in E\}$. In particular, if $E \in \mathcal{F}_A$, then $\theta_x E \in \mathcal{F}_{A + x}$.

For $\alpha \in [0,1]$, we denote by $\pi_\alpha$ the product Bernoulli($\alpha$) measure on $\mathcal{F}$.

Let $\prec$ denote a well-ordering of $\Z^d$.

\subsection{Spread-out random walk}
We call an \textit{$R$-spread out random walk} on $\Z^d$ started at $z \in \Z^d$ the continuous-time Markov chain $(X^z_t)_{t\geq 0}$ on $\Z^d$ with $X^z(0) = z$,  Exponential(1) holding times which jumps from any site $x$ to a site uniformly chosen in $B_R(x)$. Its infinitesimal generator is thus
$$(Lf)(x)  = \sum_{\substack{y \in \Z^d:\\0 < |x-y|_1 \leq R}} \frac{f(y) - f(x)}{|B_1(R)|-1},$$
with $f: \Z^d \to \R$. 
Given distinct vertices $x,y \in \Z^d$, assume $(X^x_t)$ and $(X^y_t)$ are independent $R$-spread out random walks started at $x$ and $y$ (and let $\mathbf{P}$ be a probability measure under which these are defined). We then let
\begin{equation}\label{eq:def_h}
h_R(x,y) = \mathbf{P}\left[\, \exists t:\; X^x_t = X^y_t \, \right]
\end{equation}
be the probability that these walks ever meet. Claim 2.7 in \cite{RV15} states that
\begin{equation}\label{eq:bound_green}
\forall R > 0,\; x,y \in \Z^d,\;x\neq y,\quad h_R(x,y) \leq f(R)\cdot |x-y|^{2-d},\; \lim_{R \to \infty} f(R) = 0.
\end{equation}


\subsection{Coalescing and annihilating random walks}
\label{section:coal}

In this section  we present a construction of systems of coalescing random walks on $\Z^d$, which, as explained in the Introduction, are used to obtain the measures $\mu_{\alpha,R}$. A typical \emph{graphical} construction of coalescing random walks consists of Poisson processes dictating jump times of particles; see for
 instance  \cite[Section 3]{RV15}. Here we rely on a different approach, using an auxiliary process which we call a process of \textit{marked partitions}. This approach is intuitively appealing and quite convenient for our proofs; in particular, it allows for a useful coupling of systems of independent, coalescing and annihilating random walks. 
 
On the other hand, the marked partition approach has the drawback of only being suitable for systems consisting of finitely many particles; that is, we start by fixing a finite set $A \subset \Z^d$ and define the system of coalescing walks in which, at time 0, there is one particle in each vertex of $A$. This will allow us to obtain the projection of $\mu_{\alpha,R}$ to $A$, which is sufficient for our purposes. 

\medskip

Let $A$ be a finite subset of $\Z^d$; this set will be fixed throughout Section \ref{section:coal}. A \textit{marked partition} of $A$ is a partition of $A$ into
\emph{blocks} (i.e., subsets) together with a set of marked vertices (i.e., distinguished vertices of $A$) such that each block of the partition contains exactly one marked vertex. We represent a marked partition by $\Pi = (M, \ell)$, where $M \subset A$ is the set of marks and $\ell: A \to M$ is a function satisfying \begin{equation}\ell(x) = x\quad \text{ for every }x \in M; \label{eq:def_of_blocks}\end{equation} the blocks in the partition are then the sets of form $\ell^{-1}(x)$, for $x \in M$.

\begin{definition}\label{def_merger_of_marked_partitions}
Assume $\Pi = (M,\ell)$ is a marked partition of $A$ and $x,y\in M$ are distinct marks. The partition $\Pi' = (M', \ell')$ of $A$ obtained by merging the blocks of $x$ and $y$ in $\Pi$ is defined as follows. Assume first that the cardinalities of $\ell^{-1}(x)$ and $\ell^{-1}(y)$ have different parities, and (without loss of generality) that $|\ell^{-1}(x)|$ is odd. We then let $M'=M\backslash \{y\}$ and 
 $$\ell'(z) = \begin{cases} x&\text{if } \; \; \ell(z) = y\\\ell(z)&\text{otherwise.}\end{cases}$$
 In words, the blocks of $x$ and $y$ are merged and $x$ is set as the mark of the resulting block. In case the cardinalities of $\ell^{-1}(x)$ and $\ell^{-1}(y)$ have the same parity, we elect one of $x$ and $y$ according to some arbitrary procedure (for example, the smaller one w.r.t.\ the order $\prec$ on $\Z^d$) and repeat the above definition, merging the blocks and making the elected point the new mark. 
\end{definition}

Assume given a probability measure $\mathbb{P}$ under which independent, $R$-spread-out random walk trajectories $\{(X^x_t)_{t\geq 0}: x \in A\}$ are defined, with $X^x_0 = x$ for each $x$. A construction of a system of coalescing random walks $\{(Y^x_t)_{t\geq 0}: x \in A\}$ will now be exhibited. The construction will rely on an auxiliary process $\{\Pi_t = (M_t,\ell_t): t \geq 0\}$ of marked partitions of $A$; once this auxiliary process is defined, we will simply set
\begin{equation}\label{eq:indep_to_coalesc}Y^x_t = X^{\ell_t(x)}_t,\quad x \in A,\; t \geq 0.\end{equation}

The definition of $\{\Pi_t: t \geq 0\}$ will be recursive.

\begin{definition}\label{def_coal_rws}
 Let $\Pi_0 = (M_0, \ell_0)$ be the trivial partition given by $M_0 = A$ and $\ell_0(x) = x$ for each $x \in A$. Also define $T_0 = 0$. Now assume that we have defined a stopping time $T_n$ (with respect to the filtration of the random walks) and also that we have defined $\{\Pi_t: 0 \leq t < \infty\}$ on $\{T_n = \infty\}$ and $\{\Pi_t: 0 \leq t \leq T_n\}$ on $\{T_n < \infty\}$. If $T_n = \infty$,  set $T_{n+1} = \infty$; otherwise let
$$T_{n+1} = \inf\{\; t > T_n:\;X^x_t = X^y_t \text{ for some distinct } x,y \in M_{T_n} \; \}.$$
For $T_n \leq t < T_{n+1}$, we set $\Pi_t = \Pi_{T_n}$. If $T_{n+1} < \infty$, then there exists a unique pair of distinct $x,y\in M_{T_n}$ such that $X^x_{T_{n+1}} = X^y_{T_{n+1}}$. We then let $\Pi_{T_{n+1}}$ be the marked partition obtained from $\Pi_{T_n}$ by merging the blocks of $x$ and $y$ according to Definition \ref{def_merger_of_marked_partitions}.
\end{definition}

It is now easy to see that Definition \ref{def_coal_rws} and \eqref{eq:indep_to_coalesc} produce a system of coalescing random walks and we omit the proof of this statement. Since the set of all intersection times of all the random walks $\{(X^x_t)_{t\geq 0}: x\in A\}$ is finite, it almost surely holds that $T_n = \infty$ for some (random) large enough $n$. It thus makes sense to define a ``terminal'' marked partition $\Pi_\infty = (M_\infty, \ell_\infty)$ given by $\Pi_t$ for $t$ large enough.

\medskip

We now show how the marked partition process also allows for the definition of a system $\{(\widetilde{Y}^x_t)_{t\geq 0}: x \in \widetilde{M}_t\}$ of \emph{annihilating random walks}
on the same probability space.

\begin{definition}\label{def_annih}
Let $\widetilde{M}_0=A$ and 
\begin{equation}\label{annih_index_set}
\widetilde{M}_t=\{\, x \in M_t \, : \, |\ell_t^{-1}(x)| \text{ is odd }  \},
\qquad t \in [0,\infty],
\end{equation}
then set
\begin{equation}\label{eq:indep_to_annih}
\widetilde{Y}^x_t = X^{x}_t,\quad x \in \widetilde{M}_t,\; t \in [0,\infty).
\end{equation}
\end{definition}
It is easy to check that Definition \ref{def_annih} indeed produces
a system of annihilating random walks,  that is, a system in which walkers perform independent continuous-time simple random walks on $\Z^d$ until two of them meet, and when they do, they immediately annihilate each other. 

Note that Definition \ref{def_annih} is the reason why parity played an important role in the way we defined the marked partition process in Definitions \ref{def_merger_of_marked_partitions} and \ref{def_coal_rws}. 

\medskip

The next lemma, which already appeared in \cite[Section 5]{RV15}, will be used to show that the number $|A\setminus M_{\infty}|$ of coalescences  is ``small'' if the range $R$ of random walk jumps is ``big''.
Recall that  $\prec$ denotes a well-ordering of $\Z^d$.
\begin{lemma}\label{lem:exponential_eta}
For any $\beta \in (0,1)$,
\begin{equation}\label{eq:exponential_eta}
\E\left[\beta^{-|A\setminus M_{\infty}|} \right]  \leq 
\prod_{x,y \in A,\;x \prec y} \left(1+h_R(x,y)\cdot (\beta^{-2} - 1) \right).
\end{equation}
\end{lemma}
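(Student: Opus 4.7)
The plan is to use the coupling with annihilating random walks from Definition~\ref{def_annih}, which lets me bound $|A\setminus M_\infty|$ in terms of the number $a$ of annihilation events, and then to exploit the fact that the annihilated pairs automatically form a random \emph{matching} on $A$, so that independence of the underlying random walks can be used in a decisive way.

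First, I would prove the deterministic bound $|A\setminus M_\infty|\le 2a$. Since $M_\infty$ indexes the blocks of $\Pi_\infty$, one has $|M_\infty|=|A|-|A\setminus M_\infty|$; by \eqref{annih_index_set}, $\widetilde M_\infty$ indexes only the \emph{odd-sized} blocks, hence $|\widetilde M_\infty|\le|M_\infty|$. A brief case analysis of Definition~\ref{def_merger_of_marked_partitions} shows that at each merge $|\widetilde M|$ decreases by exactly $2$ when both merged blocks have odd size, and is unchanged otherwise, so $|\widetilde M_\infty|=|A|-2a$. Combining the two yields $|A\setminus M_\infty|\le 2a$, and writing $\gamma:=\beta^{-2}>1$,
$$\beta^{-|A\setminus M_\infty|}\le\gamma^a.$$

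Next, let $\mathcal M\subset\binom{A}{2}$ be the (random) set of pairs that annihilate each other; since each particle annihilates at most once, $\mathcal M$ is a matching on $A$ and $a=|\mathcal M|$. Expanding $\gamma=1+(\gamma-1)$ in each factor,
$$\gamma^a=\prod_{e\in\mathcal M}\bigl(1+(\gamma-1)\bigr)=\sum_{M\subseteq\mathcal M}(\gamma-1)^{|M|},$$
and taking expectations yields
$$\E\bigl[\gamma^a\bigr]=\sum_{M\text{ matching on }A}(\gamma-1)^{|M|}\,\P(M\subseteq\mathcal M).$$
For a fixed matching $M=\{\{u_i,v_i\}\}_{i=1}^k$, the event $\{M\subseteq\mathcal M\}$ forces $X^{u_i}$ to meet $X^{v_i}$ for every $i$; because $M$ is a matching, the $2k$ walks involved are distinct and hence independent, so the probability of this intersection is exactly $\prod_i h_R(u_i,v_i)$. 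Therefore $\P(M\subseteq\mathcal M)\le\prod_{\{x,y\}\in M}h_R(x,y)$, and enlarging the sum from matchings to all subsets of pairs (the summands are non-negative) recognizes the right-hand side as $\prod_{x\prec y}\bigl(1+(\beta^{-2}-1)h_R(x,y)\bigr)$, which is \eqref{eq:exponential_eta}.

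The subtle point I anticipate is the very last argument above: one must \emph{not} attempt the more naive route of writing $\gamma^a\le\prod_{x\prec y}(1+(\gamma-1)I_{xy})$ with $I_{xy}=\one\{X^x\text{ meets }X^y\}$ and then taking expectations, because the indicators $I_{xy}$ are positively correlated (they share walks) and an FKG-type bound would give an inequality in the wrong direction. It is precisely the matching property of $\mathcal M$, absent from a generic union over meeting pairs, that forces the relevant walks to be disjoint and makes the independence argument work.
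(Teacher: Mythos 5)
Your proof is correct and follows essentially the same route as the paper: pass from the coalescing system to the annihilating one (so that $|A\setminus M_\infty|\leq|A\setminus\widetilde M_\infty|=2a$) and then bound the exponential moment of the number of annihilations. The one difference is that the paper stops at this reduction and cites \cite[(5.14)]{RV15} for the bound $\E[\beta^{-2a}]\leq\prod_{x\prec y}(1+h_R(x,y)(\beta^{-2}-1))$, whereas you supply the argument in full: expanding $\gamma^a=\sum_{M\subseteq\mathcal{M}}(\gamma-1)^{|M|}$, observing that the random set $\mathcal{M}$ of annihilated pairs is a matching, and using that a fixed matching involves pairwise disjoint (hence independent) walks to obtain $\P(M\subseteq\mathcal{M})\leq\prod_{\{x,y\}\in M}h_R(x,y)$. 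This is exactly the mechanism underlying the cited inequality (the paper's $\eta_{x,y}$ are the indicators $\mathds{1}_{\{x,y\}\in\mathcal{M}}$, and $\E[\beta^{-2\mathcal{A}_\infty}]=\E[\prod_{x\prec y}(1+(\beta^{-2}-1)\eta_{x,y})]$ expands to the same sum over matchings), so the substance is the same; your write-up has the advantage of being self-contained. Your closing remark about why a naive FKG-type bound on the indicators would go the wrong way is also the right thing to flag: it is precisely the matching constraint, not positive association, that makes the estimate work.
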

\begin{proof}
For each distinct $x,y \in A$, let $\eta_{x,y}$ be the indicator of the event that, at some point in the marked partition process $\{\Pi_t: t \geq 0\}$, a block with mark $x$ and odd cardinality is merged with a block with mark $y$ and odd cardinality, i.e., the walkers $\widetilde{Y}^x$ and $\widetilde{Y}^y$ annihilate each other
before any other walker annihilates either of them.

Now we note that 
$\widetilde{M}_\infty \subseteq M_\infty$ and $|A\setminus \widetilde{M}_\infty|=2\mathcal{A}_\infty(A)$, where
$\mathcal{A}_\infty(A):=
\sum_{x, y \in A,\, x \prec y}\eta_{x,y}$,
thus we only need to show that $\E\left[\beta^{-2\mathcal{A}_\infty(A)} \right]$ is less than or equal to 
the right-hand side of \eqref{eq:exponential_eta} in order to conclude the proof of \eqref{eq:exponential_eta}. 
Now this is exactly \cite[(5.14)]{RV15}, thus the proof of Lemma
\ref{lem:exponential_eta} is complete. \end{proof}


\subsection{A bound on the probability of the joint occurrence of events}

 The aim of this lemma is to control positive correlations present in $\mu_{\alpha,R}$ using Lemma
\ref{lem:exponential_eta}.
 Recall the notion of $h_R(x,y)$ of from \eqref{eq:def_h}.

\begin{lemma}\label{lem:disjointly}
Let $B \subset \Z^d$ be finite with $0 \in B$ and $x_1,\ldots, x_n \in \Z^d$ be such that the sets
 $x_i + B$, $1\leq i \leq n$ are disjoint. Then, for any $E \in \mathcal{F}_B$,
\begin{equation}\label{disjointly_eq}
\mu_{\alpha,R}\left(\bigcap_{i=1}^n \theta_{x_i}E \right) 
\leq 
\pi_\alpha(E)^n\cdot \prod_{\substack{u,v \in \cup_i(x_i + B)\\u \prec v}} \left(1 + h_R(u,v) \cdot \left(\pi_\alpha(E)^{-2} -1\right) \right).
\end{equation}
\end{lemma}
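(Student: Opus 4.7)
I plan to establish the pointwise bound
\begin{equation*}
\P\Big(\bigcap_{i=1}^n \theta_{x_i}E \,\Big|\, \Pi_\infty\Big) \;\leq\; \pi_\alpha(E)^{\,n - |A \setminus M_\infty|},
\qquad A := \bigcup_{i=1}^n (x_i+B),
\end{equation*}
and then take expectations and invoke Lemma~\ref{lem:exponential_eta} with $\beta := \pi_\alpha(E)$, obtaining
\begin{equation*}
\mu_{\alpha,R}\Big(\bigcap_i \theta_{x_i}E\Big) \leq \pi_\alpha(E)^n \cdot \E\big[\pi_\alpha(E)^{-|A\setminus M_\infty|}\big] \leq \pi_\alpha(E)^n \prod_{\substack{u,v \in A\\ u \prec v}}\big(1 + h_R(u,v)(\pi_\alpha(E)^{-2}-1)\big),
\end{equation*}
which is exactly \eqref{disjointly_eq}.

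To prove the pointwise bound I condition on the terminal marked partition $\Pi_\infty = (M_\infty, \ell_\infty)$ produced by the construction of Section~\ref{section:coal}; conditionally on $\Pi_\infty$, the configuration $\xi|_A$ is obtained by attaching an i.i.d.\ Bernoulli$(\alpha)$ label to each block and copying it to every site of that block. I partition $\{1,\ldots,n\}$ by the equivalence relation ``$i \sim j$ iff some block of $\Pi_\infty$ intersects both $x_i+B$ and $x_j+B$'' into classes $\mathcal{C}_1,\ldots,\mathcal{C}_c$. Labels relevant to different classes are disjoint and independent, so the conditional probability factorises as $\prod_\ell \P(\bigcap_{i\in\mathcal{C}_\ell}\theta_{x_i}E\mid\Pi_\infty)$. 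Writing $n_\ell := |\mathcal{C}_\ell|$ and letting $k_\ell$ be the number of coalescences occurring inside $\bigcup_{i\in\mathcal{C}_\ell}(x_i+B)$, it suffices to show for each $\ell$ that
\begin{equation*}
\P\Big(\bigcap_{i\in\mathcal{C}_\ell}\theta_{x_i}E\,\Big|\,\Pi_\infty\Big) \;\leq\; \pi_\alpha(E)^{\,n_\ell - k_\ell},
\end{equation*}
since summing exponents together with $\sum_\ell n_\ell = n$ and $\sum_\ell k_\ell = |A \setminus M_\infty|$ yields the pointwise bound.

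The case $n_\ell - k_\ell \leq 0$ is trivial since the right-hand side is then $\geq 1$, so only $n_\ell - k_\ell = 1$ requires work. A counting argument on the bipartite incidence graph between the boxes in $\mathcal{C}_\ell$ and the blocks of $\Pi_\infty$ that meet two or more of these boxes forces, in this case, that graph to be a tree in which each multi-box block contributes exactly one site per box it meets, and no box carries an internal coalescence. Multi-box blocks have bipartite degree $\geq 2$, so every leaf of the tree is a box; hence there exists a leaf box $B_{i_a}$ which meets exactly one multi-box block at a single site $u_a$, with its other $|B|-1$ sites being singletons of $\Pi_\infty$. I then induct on $n_\ell$: the $|B|-1$ labels private to $B_{i_a}$ appear only in the event $\theta_{x_{i_a}}E$, so averaging them out and bounding the resulting conditional probability of $\theta_{x_{i_a}}E$ by $1$ yields $\P(\bigcap_{i\in\mathcal{C}_\ell}\theta_{x_i}E\mid\Pi_\infty) \leq \P(\bigcap_{i\in\mathcal{C}_\ell\setminus\{i_a\}}\theta_{x_i}E\mid\Pi_\infty)$. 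A direct computation checks that $\Pi_\infty$ restricted to $\bigcup_{i\neq i_a}(x_i+B)$ is still in the tree case with $n_\ell - 1$ boxes, so the inductive hypothesis provides the desired bound $\pi_\alpha(E)$; the base case $n_\ell = 1$ forces $k_\ell = 0$, and then $\xi$ is i.i.d.\ Bernoulli on $x_{i_1}+B$ so the conditional probability equals $\pi_\alpha(E)$ exactly.

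I expect the main obstacle to be the combinatorial step in the tree case—extracting the rigid bipartite-tree structure from the single equation $n_\ell - k_\ell = 1$, and then verifying that removing a leaf box preserves this structure with one fewer box. Once this is in place, the rest is a fairly mechanical factorisation-and-averaging argument followed by a direct appeal to Lemma~\ref{lem:exponential_eta}.
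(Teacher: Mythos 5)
Your overall strategy matches the paper's at the highest level: reduce the left-hand side to $\pi_\alpha(E)^n \cdot \E\big[\pi_\alpha(E)^{-|A\setminus M_\infty|}\big]$, then invoke Lemma~\ref{lem:exponential_eta} with $\beta = \pi_\alpha(E)$. The final step is identical.

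Where you diverge is in how you obtain the intermediate bound. You aim for the pointwise (conditional on $\Pi_\infty$) estimate $\pi_\alpha(E)^{\,n - |A\setminus M_\infty|}$ and prove it by a fairly heavy combinatorial machine: partition the indices into blocks-sharing classes, show $n_\ell - k_\ell \leq 1$ per class, extract a rigid bipartite-tree structure in the critical case $n_\ell - k_\ell = 1$, and induct by pruning leaf boxes. This appears sound (I checked the leaf-removal step preserves the tree case with one fewer box and one fewer coalescence, and the base case and trivial case are handled correctly), but you yourself flag that the combinatorial extraction is the main obstacle and you have not fully verified the details. The paper does something far simpler. It introduces the auxiliary i.i.d.\ Bernoulli field $\zeta$, sets $\xi(x) = \zeta(\ell_\infty(x))$, defines $\mathcal{I} = \{i : B_i \subset M_\infty\}$, and observes (i) $\xi = \zeta$ on $B_i$ whenever $i \in \mathcal{I}$, and (ii) the events $\theta_{x_i}E$ for $i \in \mathcal{I}$ depend on disjoint $\zeta$-coordinates and are independent of the walk data, hence of $\mathcal{I}$. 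Simply dropping the events with $i \notin \mathcal{I}$ yields $\pi_\alpha(E)^{|\mathcal{I}|}$, and then the one-line count $n - |\mathcal{I}| = \sum_i \mathds{1}_{\{B_i \not\subset M_\infty\}} \leq \sum_i |B_i \setminus M_\infty| = |A\setminus M_\infty|$ gives exactly the exponent you want (indeed the paper's pointwise bound $\pi_\alpha(E)^{|\mathcal{I}|}$ is tighter than yours). In other words, the paper's insight is that there is no need to extract \emph{any} probability from a box that has suffered even a single coalescence; crediting such a box with factor $1$ and counting wastes at most one unit of $|A\setminus M_\infty|$ per such box. Your tree/induction argument reproves this essentially trivial inequality the hard way. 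It would benefit you to look for the cheap bound first: whenever a conditional factorisation threatens to require structural combinatorics, ask whether coarsely dropping the ``bad'' factors already suffices.
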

\begin{proof} Let $B_i=x_i + B$ and
 $A = \cup_{i=1}^n B_i$. Let $\P$ be a probability measure under which independent, $R$-spread-out random walks $\{(X^x_t)_{t \geq 0}: x\in A\}$ with $X^x_0 = x$ are defined; let $\{(Y^x_t)_{t \geq 0}: x\in A\}$, $\Pi_t = (M_t,\ell_t)$ for $0 \leq t \le \infty$ be as defined in Section \ref{section:coal}. Also assume that under $\P$, and independently from the coalescing walks, independent Bernoulli($\alpha$) random variables $\{\zeta(x):x\in A\}$ are defined. Then set
\begin{equation}
\xi(x) = \zeta(\ell_\infty(x)),\quad x \in A.\label{eq:def_xi_zeta}
\end{equation}
Thus defined, the distribution of $\{\xi(x):x\in A\}$ is equal to $\mu_{\alpha,R}$ projected to $\{0,1\}^A$
(see \cite[Section 3]{RV15} for the details of this construction of $\mu_{\alpha,R}$).
Now define
\begin{equation}\label{def_I}
\mathcal{I} = \{ \, i: B_i \subset M_\infty \, \}.
\end{equation}
By \eqref{eq:def_of_blocks} and \eqref{eq:def_xi_zeta},
\begin{equation}\label{eq:where_zeta_xi}
i \in \mathcal{I} \quad \Longrightarrow \quad \xi(x) = \zeta(x) \; \text{ for all } x \in B_i.
\end{equation}

We now compute
\begin{multline}
\label{eq:last_line_long}
 \mu_{\alpha,R}\left(\bigcap_{i=1}^n \theta_{x_i}E \right) 
 =
\P\left[\xi \in \bigcap_{i=1}^n \theta_{x_i} E \right]
\leq 
\P\left[\xi \in \bigcap_{i \in \mathcal{I}} \theta_{x_i} E \right]
\stackrel{\eqref{eq:where_zeta_xi}}{=}
\P\left[\zeta \in \bigcap_{i \in \mathcal{I}} \theta_{x_i} E \right] \\
=
 \sum_{I \subset \{1,\ldots, n\}} \P[\,\mathcal{I} = I\,] \cdot \P\left[\left.\zeta \in \bigcap_{i \in I} \theta_{x_i} E \, \right| \, \mathcal{I} = I\right]
= \sum_{I \subset \{1,\ldots, n\}} \P[\, \mathcal{I} = I \, ] \cdot \pi_\alpha(E)^{|I|}\\
= \pi_\alpha(E)^n \cdot \E\left[\left(\frac{1}{\pi_\alpha(E)} \right)^{n-|\mathcal{I}|} \right].
\end{multline}
Now we note that
$$n - |\mathcal{I}| \stackrel{ \eqref{def_I} }{=} \sum_{i=1}^n \mathds{1}_{\{B_i \backslash M_\infty \neq \varnothing \}} 
\leq
 \sum_{i=1}^n |B_i \backslash M_\infty| = |A \backslash M_\infty|  $$
Finally, \eqref{disjointly_eq}  is obtained by plugging this inequality in \eqref{eq:last_line_long} and then applying Lemma \ref{lem:exponential_eta}.
\end{proof}

\subsection{Renormalization scheme}\label{ss:renormalization}
Together with Lemma \ref{lem:disjointly}, the main tool in our proof of Theorem \ref{thm:main} is multi-scale renormalization. Specifically, we will use the same renormalization scheme as in \cite{Ra15} and \cite{RV15}, which in turn is a variant of the one in \cite{sznitman_decoupling}. As in these references, renormalization is the ingredient which allows us to argue that large-scale percolation crossing events imply  numerous crossings of small and sparsely-located boxes.

Fix $d \geq 3$ and $L \in \N$. Define
$$L_N = 6^N\cdot L,\quad \mathcal{L}_N = L_N \cdot \Z^d,\qquad N \geq 0.$$
For $k \geq 0$, let $T_{(k)} = \{1,2\}^k$ (with $T_{(0)} = \{\emptyset\}$) and let $$T_N = \bigcup_{k=0}^N T_{(k)}$$ be  the binary tree of height $N$. For $0 \leq k < N$ and $m = (\eta_1,\ldots, \eta_k) \in T_{(k)}$, let $m_1= (\eta_1,\ldots, \eta_k,1)$ and $m_2 = (\eta_1,\ldots, \eta_k,2)$ be the two children of $m$.
\begin{definition}
$\mathcal{T}: T_N \to \Z^d$ is a proper embedding of $T_N$ if
\begin{enumerate}
\item $\mathcal{T}(\{\emptyset\}) = 0;$ 
\item for all $0 \leq k \leq N$ and $m \in T_{(k)}$ we have $\mathcal{T}(m) \in \mathcal{L}_{N-k}$;
\item for all $0 \leq k < N$ and $m \in T_{(k)}$ we have
\begin{equation}
|\mathcal{T}(m_1) - \mathcal{T}(m)| = L_{N-k},\qquad |\mathcal{T}(m_2) - \mathcal{T}(m)| = 2L_{N-k}.
\end{equation}
\end{enumerate}
We let $\Lambda_N$ denote the set of proper embeddings of $T_N$ into $\Z^d$.
\end{definition}

We will now reproduce three results concerning proper embeddings. Their proofs can be found in \cite{Ra15}. The first of them bounds the number of proper embeddings.
\begin{lemma}
\label{lem:renorm_count} There exists $C_d > 0$ such that, for all $N \in \N$,
\begin{equation} 
\label{eq:renorm_count}
|\Lambda_N| \leq (C_d)^{2^N}.
\end{equation}
\end{lemma}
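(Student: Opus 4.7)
The plan is a straightforward level-by-level counting argument. The root is pinned by condition (1), so $|\Lambda_N|$ is bounded by the product, over all internal nodes $m$ (that is, $m \in T_{(k)}$ with $0 \le k < N$), of the number of admissible placements of the ordered pair of children $(m_1, m_2)$ given $\mathcal{T}(m)$. I would show that this per-node factor is bounded by a constant $C_d$ depending only on $d$; since the total number of internal nodes equals $\sum_{k=0}^{N-1} 2^k = 2^N - 1$, the bound $|\Lambda_N| \le C_d^{2^N - 1} \le C_d^{2^N}$ follows immediately.

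For the per-node bound, fix $m \in T_{(k)}$ with $0 \le k < N$. Condition (2) requires $\mathcal{T}(m_i) \in \mathcal{L}_{N-k-1} = L_{N-k-1} \cdot \Z^d$ for $i = 1,2$, while condition (3) requires $|\mathcal{T}(m_i) - \mathcal{T}(m)| = i \cdot L_{N-k}$. Since $\mathcal{T}(m) \in \mathcal{L}_{N-k} \subset \mathcal{L}_{N-k-1}$ and $L_{N-k} = 6\, L_{N-k-1}$, we may subtract $\mathcal{T}(m)$ and rescale by $L_{N-k-1}$, reducing the count to the number of lattice points of $\Z^d$ on the $\ell^\infty$-sphere of radius $6i$. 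That cardinality equals $(12i+1)^d - (12i-1)^d$. Hence $m_1$ has at most $a_d := 13^d - 11^d$ admissible positions and $m_2$ has at most $b_d := 25^d - 23^d$ admissible positions, independently; so each internal node contributes a factor at most $a_d \cdot b_d$.

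Setting $C_d := a_d b_d$ and taking the product over the $2^N - 1$ internal nodes of $T_N$ gives the bound. The argument is entirely combinatorial and I do not foresee any genuine obstacle; the only point requiring minor care is to exploit the nesting $\mathcal{L}_{N-k} \subset \mathcal{L}_{N-k-1}$ together with the integrality of $L_{N-k}/L_{N-k-1} = 6$ so that the spherical count becomes a dimension-dependent constant rather than a quantity depending on $k$ or $N$.
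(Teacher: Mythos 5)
Your argument is correct and is essentially the standard proof that the paper outsources to \cite{Ra15}: one bounds $|\Lambda_N|$ by counting, for each of the $2^N-1$ internal nodes $m$, the number of admissible placements of $(\mathcal{T}(m_1),\mathcal{T}(m_2))$ given $\mathcal{T}(m)$, which after rescaling by $L_{N-k-1}$ reduces to counting lattice points on $\ell^\infty$-spheres of radii $6$ and $12$, a dimension-dependent constant. The per-node constant and the use of the nesting $\mathcal{L}_{N-k}\subset\mathcal{L}_{N-k-1}$ are exactly as in the reference, so there is nothing to add.
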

The second result establishes a relation between proper embeddings and crossing events. For a helpful illustration, see Figure 2 in \cite{RV15}.
\begin{lemma}\label{lem:renorm_paths}
If $\gamma$ is a $*$-connected path in $\Z^d$ with
$$\{\gamma\} \cap S(L_N-1) \neq \varnothing, \quad \{\gamma\} \cap S(2L_N) \neq \varnothing,$$
then there exists $\mathcal{T} \in \Lambda_N$ such that
\begin{equation}
\label{eq:renorm_paths} \{\gamma\} \cap S(\mathcal{T}(m),L_0-1) \neq \varnothing,\;\; \{\gamma\} \cap S(\mathcal{T}(m),2L_0) \neq \varnothing \quad \forall m \in T_{(N)}.
\end{equation}
\end{lemma}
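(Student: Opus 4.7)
The plan is to proceed by induction on $N$. The base case $N=0$ is immediate: $T_0 = \{\emptyset\}$ admits only the embedding $\mathcal{T}(\emptyset) = 0$, so \eqref{eq:renorm_paths} reduces to the hypothesis on $\gamma$.

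For the inductive step, fix $N \geq 1$, pick vertices $z_1 \in \{\gamma\} \cap S(L_N - 1)$ and $z_2 \in \{\gamma\} \cap S(2L_N)$, and carry out the core geometric construction: locate two lattice points $\mathcal{T}(1) \in \mathcal{L}_{N-1} \cap S(L_N)$ and $\mathcal{T}(2) \in \mathcal{L}_{N-1} \cap S(2L_N)$ with $|\mathcal{T}(i) - z_i| < L_{N-1}$ for $i=1,2$. For $\mathcal{T}(1)$, I would fix a coordinate $j$ on which the $\ell^\infty$-norm of $z_1$ is attained, assign $\mathcal{T}(1)^{(j)} := \mathrm{sgn}(z_1^{(j)}) \cdot L_N$, and for each $k \neq j$ assign $\mathcal{T}(1)^{(k)}$ to be the nearest multiple of $L_{N-1}$ to $z_1^{(k)}$. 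The deviation in coordinate $j$ is exactly $1$ and in every other coordinate at most $L_{N-1}/2$, while the bound $|z_1^{(k)}| \leq L_N-1$ keeps all non-$j$ coordinates of $\mathcal{T}(1)$ within $L_N$ in absolute value, so $\mathcal{T}(1) \in \mathcal{L}_{N-1} \cap S(L_N)$. The construction for $\mathcal{T}(2)$ is analogous.

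Next, I verify that $\gamma$ performs a local crossing around each $\mathcal{T}(i)$: it visits $B(\mathcal{T}(i), L_{N-1}-1)$ at $z_i$, while the reverse triangle inequality yields $|z_{3-i} - \mathcal{T}(i)| \geq L_N = 6L_{N-1} > 2L_{N-1}$, so $\gamma$ also exits $B(\mathcal{T}(i), 2L_{N-1})$. Because $\gamma$ is $*$-connected, the $\ell^\infty$-distance to $\mathcal{T}(i)$ changes by at most $1$ per step, so $\gamma$ meets every intermediate integer sphere, in particular $S(\mathcal{T}(i), L_{N-1}-1)$ and $S(\mathcal{T}(i), 2L_{N-1})$. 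Applying the inductive hypothesis to each translated path $\gamma - \mathcal{T}(i)$ produces $\mathcal{T}^{(i)} \in \Lambda_{N-1}$ whose leaves record the required crossings of $\gamma - \mathcal{T}(i)$. I then assemble the desired $\mathcal{T} \in \Lambda_N$ by setting $\mathcal{T}(\emptyset) := 0$ and, for each $m = (i, \eta_2, \ldots, \eta_k) \in T_N$ with $k \geq 1$, $\mathcal{T}(m) := \mathcal{T}(i) + \mathcal{T}^{(i)}(\eta_2, \ldots, \eta_k)$. The proper-embedding conditions follow by combining the defining properties of $\mathcal{T}^{(i)}$ with $\mathcal{L}_{N-1} \supseteq \mathcal{L}_{N-k}$ for $k \geq 1$, and the leaf-crossing condition at each $m \in T_{(N)}$ transfers back from $\gamma - \mathcal{T}(i)$ to $\gamma$ by translation.

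I expect the main obstacle to be the geometric placement of $\mathcal{T}(1)$ and $\mathcal{T}(2)$: each must lie simultaneously on the prescribed $\ell^\infty$-sphere and on the sublattice $\mathcal{L}_{N-1}$, and within distance $L_{N-1}-1$ of the reference point. The coordinate-by-coordinate rounding scheme above fits these constraints as long as $L_{N-1}/2 < L_{N-1} - 1$, i.e. $L_{N-1} \geq 3$; such a mild size requirement on $L$ is standard in this renormalization scheme and is presumably understood throughout Section \ref{ss:renormalization}.
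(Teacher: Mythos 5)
Your overall structure---induction on $N$, rounding the crossing points of $\gamma$ to nearby points of $\mathcal{L}_{N-1}$ lying on $S(L_N)$ and $S(2L_N)$, applying the inductive hypothesis to the translated paths, and gluing the two resulting embeddings of $T_{N-1}$---is the right one and is essentially the argument from \cite{Ra15} that the paper cites. The verification of the proper-embedding axioms and the translation-invariance of the leaf condition are correct, apart from a sign slip: for $k\ge 1$ one needs $\mathcal{L}_{N-1}\subseteq\mathcal{L}_{N-k}$, not $\supseteq$.

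The genuine gap is the size constraint you flag at the end and dismiss as ``presumably understood.'' It is not understood in this paper: Section 3.2 invokes the lemma with $L_0=1$, so the inductive step from $N=0$ to $N=1$ requires $\mathcal{T}(1)\in\mathcal{L}_0=\Z^d$ on $S(L_1)$ within $\ell^\infty$-distance $L_0-1=0$ of the reference point, which your rounding of $z_1\in S(L_N-1)$ cannot produce (the maximizing coordinate must move by at least $1$). The fix reuses an observation you already make: since $\gamma$ is $*$-connected and meets both $S(L_N-1)$ and $S(2L_N)$, the integer-valued sequence $i\mapsto|\gamma(i)|$ changes by at most one per step, so $\gamma$ also meets $S(L_N)$. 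Take $w_1\in\{\gamma\}\cap S(L_N)$ and round $w_1$ instead of $z_1$: in a coordinate $j$ with $|w_1^{(j)}|=L_N=6L_{N-1}$, set $\mathcal{T}(1)^{(j)}=w_1^{(j)}$, which is already a multiple of $L_{N-1}$ (deviation $0$), while nearest-multiple rounding in the remaining coordinates gives deviation at most $\lfloor L_{N-1}/2\rfloor\le L_{N-1}-1$, valid for every $L_{N-1}\ge1$. The $\mathcal{T}(2)$ side was never a problem, because $z_2^{(j)}=\pm 2L_N$ is itself a multiple of $L_{N-1}$. With this change your argument works for all $L\ge1$.
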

Finally, the third result guarantees that, in a proper embedding, the `bottom-level' boxes are sparsely located.
\begin{lemma}\label{lem:positions}
For any $\mathcal{T} \in \Lambda_N$ and any $m_0 \in T_{(N)}$, 
\begin{equation}
\label{eq:positions} \left| \left\{ m \in T_{(N)}: \mathrm{dist}\left(B(\mathcal{T}(m_0),2L),B(\mathcal{T}(m),2L) \right) \leq 6^k\cdot L/2\right\} \right| \leq 2^{k-1},\quad k\geq 1.
\end{equation}
\end{lemma}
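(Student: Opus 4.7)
\medskip\noindent\textbf{Proof sketch.}
The plan is to exploit the self-similar scales of a proper embedding: two leaves of $T_N$ that are mapped close to each other in $\Z^d$ by $\mathcal{T}$ must share a common ancestor high up in the tree (close to the leaf layer), so the number of such leaves is controlled by the size of a short subtree. First I would establish a lower bound on $|\mathcal{T}(m_0) - \mathcal{T}(m)|$ in terms of the level of the lowest common ancestor. Let $p \in T_{(j)}$ denote the lowest common ancestor of $m_0$ and $m$, and let $p_1, p_2 \in T_{(j+1)}$ be its two children, labelled so that $p_1$ lies on the path to $m_0$ and $p_2$ on the path to $m$. Condition~(3) in the definition of $\Lambda_N$ forces $\{|\mathcal{T}(p_1) - \mathcal{T}(p)|, |\mathcal{T}(p_2) - \mathcal{T}(p)|\} = \{L_{N-j}, 2L_{N-j}\}$, so the reverse triangle inequality yields $|\mathcal{T}(p_1) - \mathcal{T}(p_2)| \ge L_{N-j}$. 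Iterating condition~(3) along each of the two remaining paths from $p_i$ down to the corresponding leaf, and summing the resulting geometric series in powers of $6$,
\[
|\mathcal{T}(m_0) - \mathcal{T}(p_1)|,\ |\mathcal{T}(m) - \mathcal{T}(p_2)| \ \le\ \sum_{\ell=j+1}^{N-1} 2 L_{N-\ell} \ =\ \tfrac{2L}{5}\bigl(6^{N-j} - 6\bigr),
\]
so one more triangle inequality gives $|\mathcal{T}(m_0) - \mathcal{T}(m)| \ge \tfrac{1}{5}(L_{N-j} + 24L)$.

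The second step is to translate the ball-distance hypothesis into a constraint on the lowest-common-ancestor level. Since $\mathrm{dist}(B(x,2L), B(y,2L)) = \max\{0,|x-y|-4L\}$ for $\ell^\infty$ balls, the hypothesis of the lemma is equivalent to $|\mathcal{T}(m_0) - \mathcal{T}(m)| \le 6^k L/2 + 4L$; combined with the lower bound from the first step this gives $6^{N-j} \le \tfrac{5}{2}\cdot 6^k - 4$, and an elementary case check using $k \ge 1$ forces $N-j \le k-1$, i.e., the lowest common ancestor of $m_0$ and $m$ lies at level at least $N-k+1$ in $T_N$. Letting $a$ denote the unique ancestor of $m_0$ at level $N-k+1$, it follows that every leaf $m$ satisfying the distance hypothesis (including $m_0$ itself) is a descendant of $a$; the subtree of $T_N$ rooted at $a$ has height $k-1$ and hence contains exactly $2^{k-1}$ leaves in $T_{(N)}$, yielding the stated bound.

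The main technical obstacle lies in the first step, specifically the careful tracking of constants: extracting the $1/5$ prefactor and the additive $+24L$ correction in the lower bound requires summing the geometric series with common ratio $6$ precisely. The choice of this ratio in the definition $L_k = 6^k L$ is tailored so that the first-step contribution $L_{N-j}$ from the lca-level dominates the cumulative deviation of all subsequent sub-ancestor steps by a definite margin -- just enough for the elementary inequality in the second step to produce the sharp bound $N-j \le k-1$, and in turn the tight count $2^{k-1}$ rather than the weaker $2^k$ that a looser bookkeeping would yield.
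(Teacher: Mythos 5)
Your lowest-common-ancestor framework is the right one, and your computations up to the constraint $6^{N-j} \leq \tfrac{5}{2}\,6^k - 4$ are correct: the lower bound $|\mathcal{T}(m_0) - \mathcal{T}(m)| \geq \tfrac{1}{5}(L_{N-j} + 24L)$ follows from the geometric series exactly as you say. The problem is the final deduction. You claim the inequality together with $k\geq1$ forces $N-j \leq k-1$, but setting $N-j = k$ gives $6^k \leq \tfrac{5}{2}\,6^k - 4$, which is equivalent to $6^k \geq 8/3$ and holds for \emph{every} $k\geq 1$. So your inequality only rules out $N-j \geq k+1$: the ancestor you actually control is the one at level $N-k$, whose subtree has $2^k$ leaves, not $2^{k-1}$. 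Your argument therefore proves the bound $2^k$, not $2^{k-1}$.

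In fact the $2^{k-1}$ in the lemma statement appears to be a misprint (the paper defers the proof to \cite{Ra15}). A concrete counterexample: take $N=1$, $\mathcal{T}(\emptyset)=0$, $\mathcal{T}(1)=(6L,0,\ldots,0)$, $\mathcal{T}(2)=(12L,0,\ldots,0)$ — this is a proper embedding. With $m_0=1$ and $k=1$, one has $\mathrm{dist}\bigl(B(\mathcal{T}(1),2L),B(\mathcal{T}(2),2L)\bigr) = 6L - 4L = 2L \leq 6L/2$, so the set in \eqref{eq:positions} has cardinality $2 > 2^{k-1}=1$. The bound $2^k$ is sharp and is what your (corrected) argument yields; it also suffices for the application, since the series in \eqref{C_prime_prime} still converges after replacing $2^{k-1}$ by $2^k$ because $2\cdot 6^{2-d}<1$ for $d\geq 3$. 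You should either prove the $2^k$ bound cleanly, adjusting the leaf count of the subtree rooted at the level-$(N-k)$ ancestor accordingly, or cite the precise statement proved in \cite{Ra15}.
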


\section{Proof of Theorem \ref{thm:main}}

Recall from the Introduction that
$$\text{Perc} = \left\{\begin{array}{l}\xi \in\{0,1\}^{\Z^d}: \text{ there exists an infinite nearest-neighbor  }\\ \text{path $\gamma = (\gamma(0), \gamma(1),\ldots)$ such that $\xi(\gamma(i)) = 1$ for each $i$}  \end{array} \right\}.$$
\subsection{Absence of percolation for $\alpha < p_c(\Z^d)$ and $R$ large}\label{ss:subcritical}
The goal of this subsection is establishing that \begin{equation}\label{eq:liminf_res}\liminf_{R\to\infty} \alpha_c(R) \geq p_c.\end{equation}
Fix $\alpha < p_c$. It will be shown that there exist $L$ and $R_0$ in $\N$ such that, letting $L_N = L\cdot 6^N$ and for any $R \geq R_0$,
\begin{equation}
\label{eq:connect_sub}
\mu_{\alpha,R}\left[B(L_N-1) \stackrel{\xi}{\longleftrightarrow} B(L_N) \right] < 2^{-2^N},\quad N \in \mathbb{N}.
\end{equation}

Observing that $$\text{Perc} \subseteq \bigcup_{M\geq 1}\bigcap_{N \geq M} \{B(L_N-1) \stackrel{\xi}{\longleftrightarrow} B(L_N)\},$$
one notes that if \eqref{eq:connect_sub} holds, then $\mu_{\alpha,R}(\text{Perc}) = 0$, that is, $\alpha_c(R) \geq \alpha$, so \eqref{eq:liminf_res} follows.

Since $\alpha < p_c$, it is possible to choose  (and fix) $L$ large enough (depending on $\alpha$) such that
\begin{equation}
\pi_\alpha\left[B(L) \stackrel{\xi}{\longleftrightarrow} B(2L)^c \right] < (4C_d)^{-1},
\label{eq:choice_L}\end{equation}
where $C_d$ is as in Lemma \ref{lem:renorm_count}.
 This is a simple consequence of the exponential decay of the cluster radius beneath $p_c$; see for instance Section 5.2 in \cite{Gr99}.

Now, Lemma \ref{lem:renorm_paths}, a union bound and Lemma \ref{lem:renorm_count} give
\begin{align}\nonumber
&\mu_{\alpha,R}\left[B(L_N-1) \stackrel{\xi}{\longleftrightarrow} B(2L_N)^c \right] \\& \leq (C_d)^{2^N}\cdot \max_{\mathcal{T} \in \Lambda_N} \mu_{\alpha,R}\left(\bigcap_{m \in T_{(N)}} \{B(\mathcal{T}(m),L)\stackrel{\xi}{\longleftrightarrow} B(\mathcal{T}(m),2L)^c \}\right).\label{eq:comp_bound}\end{align}
In order to bound the maximum on the right-hand side, fix $\mathcal{T} \in \Lambda_N$. Define the event
$$E = \left\{ B(0,L)\stackrel{\xi}{\longleftrightarrow} B(0,2L)^c\right\}.$$ 
Also define the set
\begin{equation}\label{A_union_balls}
A = \bigcup_{m\in T_{(N)}} B(\mathcal{T}(m),2L).
\end{equation}
Note that by \eqref{eq:positions}, the balls in the above union are disjoint. Then, Lemma \ref{lem:disjointly} gives
\begin{align}\nonumber
&\mu_{\alpha,R}\left(\bigcap_{m \in T_{(N)}} \{B(\mathcal{T}(m),L)\stackrel{\xi}{\longleftrightarrow} B(\mathcal{T}(m),2L)^c \}\right)\\
\nonumber&\leq (\pi_\alpha(E))^{2^N} \cdot \prod_{\substack{u,v \in A,\; u \prec v}} \left(1+ h_R(u,v)\cdot \left( \pi_\alpha(E)^{-2} - 1\right) \right)\\
&\stackrel{\eqref{eq:bound_green},\eqref{eq:choice_L}}{\leq} (4C_d)^{-2^N}\cdot 
\exp\left\{ \left(\pi_\alpha(E)^{-2} - 1\right)f(R) \sum_{\substack{u,v\in A,\; u \prec v}}|u-v|^{2-d} \right\}.\label{eq:comp_bound2}
\end{align}

Now, putting together \eqref{eq:comp_bound} and \eqref{eq:comp_bound2} and recalling
from \eqref{eq:bound_green} that $f(R) \xrightarrow{R\to\infty} 0$, the desired convergence \eqref{eq:connect_sub} will follow from showing that
\begin{equation}
\label{eq:upper_A}
\sum_{\substack{u,v\in A,\; u \prec v}}|u-v|^{2-d}\leq C' 2^N,
\end{equation}

where $C'$ is a positive constant that does not depend on $N \in \N$ or $\mathcal{T} \in \Lambda_N$. To this end, define 
\begin{equation}\label{V_u_k_set}
\mathcal{V}_\mathcal{T}(u,k) = \left\{ v \in A:\; v \neq u,\; |u-v| \leq 6^k L/2 \right\}, \qquad u \in A,\;k \geq 1. 
\end{equation}
Now  we have
\begin{multline}
\label{cardinality_bound}
|\mathcal{V}_\mathcal{T}(u,k)| \stackrel{\eqref{A_union_balls}}{\leq} |B(0,2L)|\cdot
\left| \left\{ m \in T_{(N)}: \mathrm{dist}\left(u ,B(\mathcal{T}(m),2L) \right) \leq 6^k L/2\right\} \right| \stackrel{ \eqref{eq:positions}  }{\leq} \\
|B(0,2L)|\cdot 2^{k-1} 
 = (4L + 1)^d \cdot  2^{k-1} \qquad \text{for any} \quad k \geq 1.
\end{multline}
Hence, for all $u\in A$,
\begin{multline}\label{C_prime_prime}
\sum_{\substack{v\in A,\; v \neq u}}|u-v|^{2-d}=
\sum_{v \in \mathcal{V}_\mathcal{T}(u,1)}|u-v|^{2-d}+\sum_{k=1}^{\infty} \;
\sum_{v \in \mathcal{V}_\mathcal{T}(u,k+1)\setminus \mathcal{V}_\mathcal{T}(u,k)  }|u-v|^{2-d}
\stackrel{ \eqref{V_u_k_set} }{\leq} \\
C+\sum_{k=1}^\infty \;
\sum_{v \in \mathcal{V}_\mathcal{T}(u,k+1)\setminus \mathcal{V}_\mathcal{T}(u,k)  } (6^kL/2)^{2-d}
 \stackrel{ \eqref{cardinality_bound} }{\leq}
C+ \sum_{k=1}^\infty (4L + 1)^d \cdot  2^{k} \cdot (6^kL/2)^{2-d} \leq
  C''
\end{multline}
and thus \eqref{eq:upper_A} follows:
\begin{equation*}
\sum_{\substack{u,v\in A,\; u \prec v}}|u-v|^{2-d} 
\stackrel{ \eqref{C_prime_prime} }{\leq} |A| \cdot C'' 
\stackrel{ \eqref{A_union_balls} }{=}
 2^N\cdot (4L + 1)^{d} \cdot C'' = 2^N C'.
\end{equation*}

\subsection{Percolation for $\alpha > p_c(\Z^d)$ and $R$ large}
It will now be shown that 
\begin{equation}\label{limsup_leq_p_c}
\limsup_{R \to \infty} \alpha_c(R) \leq p_c.
\end{equation}
To this end, fix $\alpha > p_c$; it suffices to show that, if $R$ is large enough, there exists an infinite percolation cluster with probability 1 under $\mu_{\alpha, R}$.

The proof involves a two-step renormalization scheme. 

The first step is a coarse graining of the lattice in which the configuration $\xi$ sampled from $\mu_{\alpha, R}$ is defined. More specifically, for some large $M \in \N$ and each $x \in \Z^d$,
we define the box $B(M x, M)$  associated to vertex $x$ in the renormalized lattice. 
A configuration $\tilde \xi \in \{0,1\}^{\Z^d}$ is then defined in the renormalized lattice through the prescription that $\tilde \xi(x) = 1$ when the restriction of $\xi$ to the box $B(M x, M)$ contains
a ``special'' locally unique giant connected cluster of $\xi$-open sites, see \eqref{high_connect}.
By \eqref{high_connect},  neighbouring ``special'' clusters are connected to each other, thus an infinite  $\tilde \xi$-open cluster guarantees the existence of an infinite open cluster for $\xi$ as well.

In the second step, the goal is to prove that $\tilde \xi$ indeed contains an infinite cluster for some large $M$ and any $R$ larger than some $R_0$. This is done through a renormalization of the type described in Section \ref{ss:renormalization} in the lattice in which $\tilde \xi$ is defined.

The starting point is defining the `locally supercritical' property involved in the first renormalization step. Given $M \in \N$, let $E(M)$ be the set of configurations $\xi \in \{0,1\}^{\Z^d}$ satisfying:
\begin{equation}\label{high_connect}
E_M = \left\{\begin{array}{c}\xi \in \{0,1\}^{\Z^d}:  \xi|_{B(M)}\text{ has a unique open cluster of }\\
\text{  diameter greater than or equal to $M$, moreover } \\
\text{ this cluster intersects all the faces of $B(M)$}\end{array} \right\}.
\end{equation}
Note that $E_M \in \mathcal{F}_{B(M)}$. If $E_M$ occurs, we call the unique cluster that appears in 
\eqref{high_connect} the \emph{special} cluster of $B(M)$. Then, given $\xi \in \{0,1\}^{\Z^d}$, define
\begin{equation}\label{eq:def_of_tilde_xi}
\tilde \xi(x) = \mathds{1}_{\theta_{Mx}E_M},\quad x \in \Z^d.
\end{equation}
Note that, if $x,y \in \Z^d$ with $|x-y|_1 = 1$ and $\tilde \xi(x)=\tilde \xi(y)=1$,
 then the special cluster of $\xi|_{B(x,M)}$ necessarily intersects the special cluster 
 of $\xi|_{B(y,M)}$. This consideration leads to the conclusion that if there exists an infinite nearest neighbor path $\tilde \upgamma$ of open sites in $\tilde \xi$, then there exists an infinite nearest neighbor path $\upgamma$ of open sites in $\xi$. Hence, letting $\tilde \mu_{\alpha, R,M}$ denote the distribution of $\tilde \xi$ when $\xi$ is sampled from $\mu_{\alpha, R}$,
\begin{equation}\label{eq:mu_tilde_mu}
\mu_{\alpha, R}(\text{Perc}) \geq \tilde \mu_{\alpha,R,M}(\text{Perc}).
\end{equation}

We now claim that, for supercritical Bernoulli site percolation, the event $E_M$ is very likely when $M$ is large:
\begin{equation}\label{eq:connectivity_event}
\alpha > p_c \quad \Longrightarrow \quad \lim_{M \to \infty} \pi_\alpha(E_M) = 1.
\end{equation}
The analogous statement for supercritical \textit{bond} percolation on $\Z^d$ is Theorem (7.61) in \cite{Gr99}. The proof is based on a block argument originally developed in \cite{Pi96} and \cite{DP96} which, as mentioned in the latter reference, works equally well for site percolation. We thus omit the proof of \eqref{eq:connectivity_event}.
By \eqref{eq:connectivity_event}, we can  find (and fix) $M$ such that 
\begin{equation}\label{eq:choice_of_M}
\pi_\alpha(E_M) >1- (4C_d)^{-1},
\end{equation}
where $C_d$ is the constant of Lemma \ref{lem:renorm_count}.

Now consider the renormalization scheme of Section \ref{ss:renormalization} with $L_0 = 1$ (so that, for $N \in \N$, $L_N = 6^N$). It will be shown that there exists $R_0$ such that if $R \geq R_0$ then
\begin{equation}
\label{eq:red_second_step}
\tilde \mu_{\alpha, R,M}\left[ B(L_N-1) \stackrel{*(1-\tilde\xi)}{\longleftrightarrow} B(L_N) \right] < 2^{-2^N},\quad N \in \N.
\end{equation}
In words, the probability that there is a $*$-connected path of \textit{closed} sites in $\tilde \xi$ connecting $B(L_N-1)$ to the outside of $B(L_N)$ is smaller than $2^{-2^N}$. Standard considerations involving planar duality (i.e., a Peierls argument) show that \eqref{eq:red_second_step} implies 
\begin{equation}\label{eq:perc_tilde}
\tilde \mu_{\alpha, R,M}(\text{Perc}) = 1.
\end{equation}
We refer readers who are unfamiliar with this type of proof to Section 4.1 of \cite{RV15}, where the same line of reasoning is carried out in detail.  Finally, together with \eqref{eq:mu_tilde_mu}, \eqref{eq:perc_tilde} yields the desired result $\mu_{\alpha, R}(\text{Perc}) = 1$.

It remains to prove \eqref{eq:red_second_step}. Lemma \ref{lem:renorm_paths}, a union bound and Lemma \ref{lem:renorm_count} give, for any $N \in \N$,
\begin{equation}\label{eq:comp_bound_tilde}
\tilde\mu_{\alpha,R,M}\left[B(L_N-1) \stackrel{*(1-\tilde\xi)}{\longleftrightarrow} B(2L_N)^c \right] 
 \leq (C_d)^{2^N}\cdot \max_{\mathcal{T} \in \Lambda_N} \tilde\mu_{\alpha,R,M}\left(\bigcap_{m \in T_{(N)}} \{\tilde \xi(\mathcal{T}(m))=0\}\right).
\end{equation}
Now, for any $\mathcal{T} \in \Lambda_N$,
\begin{equation}\label{switching_from_tilde}
\tilde\mu_{\alpha,R,M}\left(\bigcap_{m \in T_{(N)}} \{\tilde \xi(\mathcal{T}(m))=0\}\right) \stackrel{\eqref{eq:def_of_tilde_xi}}{=} \mu_{\alpha, R}\left(\bigcap_{m \in T_{(N)}}( \theta_{M\cdot \mathcal{T}(m)}E_M )^c\right).
\end{equation}
Noting that the sets $B(M\cdot \mathcal{T}(m),M)$ for $m \in T_{(N)}$ are pairwise disjoint and defining $A$ as the union of all these sets, Lemma \ref{lem:disjointly} can then be applied as in \eqref{eq:comp_bound2}, yielding
\begin{equation}\label{supcrit_bound_decorr_applied}
\mu_{\alpha, R}\left(\bigcap_{m \in T_{(N)}}( \theta_{M\cdot \mathcal{T}(m)}E_M )^c\right) 
< (4C_d)^{-2^N} \exp\left\{\left(\pi_\alpha(E_M)^{-2} - 1\right)f(R) 
\sum_{\substack{u,v\in A,\; u \prec v}}|u-v|^{2-d} \right\}. 
\end{equation}
Notice the similarity between \eqref{supcrit_bound_decorr_applied} and \eqref{eq:comp_bound2}.
Now \eqref{eq:red_second_step} follows  from \eqref{supcrit_bound_decorr_applied} 
using the same calculations that we used to show that \eqref{eq:connect_sub} follows from \eqref{eq:comp_bound2}: a variant of \eqref{eq:upper_A} together with \eqref{eq:bound_green}, 
\eqref{eq:comp_bound_tilde}--\eqref{supcrit_bound_decorr_applied} gives \eqref{eq:red_second_step}, completing
the proof of \eqref{limsup_leq_p_c}.

\medskip 
 
 {\bf Acknowledgements:} We thank G\'abor Pete for valuable discussions.
  The work of B.R.\ is partially supported by OTKA (Hungarian
National Research Fund) grants K100473 and K109684, the Postdoctoral Fellowship of
NKFI (National Research, Development and Innovation Office) and the Bolyai Research Scholarship
of the Hungarian Academy of Sciences.

{\footnotesize

}

\end{document}